\numberwithin{equation}{section}
\definecolor{webgreen}{rgb}{0,.5,0}
\definecolor{webbrown}{rgb}{.6,0,0}
\newtheorem{theorem}{Theorem}
\newtheorem*{thm}{Theorem}
\newtheorem{proposition}[theorem]{Proposition}
\title{Degree $5$ Fibonacci Sums via the Gelin-Ces\`aro Identity}
\author[]{Kunle Adegoke \\\href{mailto:adegoke00@gmail.com}{\tt adegoke00gmail.com}}
\affil{Department of Physics and Engineering Physics, \mbox{Obafemi Awolowo University}, 220005 Ile-Ife, Nigeria}
\begin{document}
\date{}

\maketitle

\begin{abstract}
\noindent Let $F_k$ be the $k$th Fibonacci number. Let $(G_k)_{k\in\mathbb Z}$ be any sequence obeying the  recurrence relation of the Fibonacci numbers. We employ the Gelin-Ces\`aro identity and an identity of Brousseau to evaluate the following sums: $\sum_{j=1}^n{(\pm 1)^{j - 1}G_j^5}$,\\ $\sum_{j = 1}^n {G_{j - 1} G_{j} G_{j + 1} G_{j + 2} G_{j + m} }$, $\sum_{j = 1}^n {(-F_{m - 3})^{n - j} ( F_{m + 2} )^j G_{j - 1} G_{j} G_{j + 1} G_{j + 2} G_{j + m} }$, and \\$\sum_{j = 1}^n {(-F_{m + 2})^{n - 2}F_{m - 3}^jG_{j + m} \left( {G_{j - 2} G_{j - 1} G_j G_{j + 1} G_{j + 2} G_{j + 3} } \right)^{ - 1} } $. Among other results, we evaluate the sum and alternating sum of products of five consectutive Fibonacci-like numbers, namely $\sum_{j = 1}^n {\left( { \pm 1} \right)^{j - 1} G_j G_{j + 1} G_{j + 2} G_{j + 3} G_{j + 4} }$.
\end{abstract}
\noindent 2010 {\it Mathematics Subject Classification}:
Primary 11B39; Secondary 11B37.

\noindent \emph{Keywords: }
Gelin-Ces\`aro identity, Fibonacci number, Lucas number, gibonacci sequence, summation identity.

\section{Introduction}

Let $F_j$ and $L_j$ be the $j$th Fibonacci and Lucas numbers. Let $(G_j(a,b))_{j\in\mathbb Z}$ be the gibonacci sequence having the same recurrence relation as the Fibonacci and Lucas sequences but starting with arbitrary initial values; that is, let
\begin{equation}
G_0  = a,\,G_1  = b;\,G_j(a,b)  = G_{j - 1} (a,b) + G_{j - 2}(a,b)\, (j \ge 2),
\end{equation}
where $a$ and $b$ are arbitrary numbers (usually integers) not both zero. Thus, $F_j=G_j(0,1)$ and $L_j=G_j(2,1)$. For brevity, we will always write $G_j$ for $G_j(a,b)$. Extension to negative indices is provided by writing the recurrence relation as $G_{-j}=G_{-(j - 2)} - G_{-(j - 1)}$; so that the gibonacci sequence is defined for all integers.

The gibonacci numbers can be accessed directly through the Binet-like formula
\begin{equation*}
G_j  = \frac{{(b - a\beta )\alpha ^j  + (a\alpha  - b)\beta^j }}{{\alpha  - \beta }},
\end{equation*}
where $\alpha=(1 + \sqrt 5)/2$, the golden ratio, and $\beta=-1/\alpha$.

Our main purpose in this paper is to evaluate $\sum_{j = 1}^n {\left( { \pm 1} \right)^{j - 1} G_{j + t} G_{j + t + 1} G_{j + t + 2} G_{j + t + 3} G_{j + t + m} } $  for any integers $m$, $n$ and $t$. We will also consider related sums.

As special cases, we will show that
\begin{equation*}
\begin{split}
&44\sum_{j = 1}^n {F_j F_{j + 1} F_{j + 2} F_{j + 3} F_{j + 4} }\\
&\qquad  =  - F_{n + 5}^5  + 7F_{n + 4}^5  + 47F_{n + 3}^5  + 31F_{n + 2}^5  - 9F_{n + 1}^5  - F_n^5  - 44F_{n + 4}  + 30,
\end{split}
\end{equation*}

\begin{equation*}
\begin{split}
&( - 1)^{n - 1} 44\sum_{j = 1}^n {( - 1)^{j - 1} F_j F_{j + 1} F_{j + 2} F_{j + 3} F_{j + 4} }\\
&\qquad =  - F_{n + 5}^5  + 9F_{n + 4}^5  + 31F_{n + 3}^5  - 47F_{n + 2}^5  + 7F_{n + 1}^5  + F_n^5  -44F_{n + 1} + ( - 1)^n 30,
\end{split}
\end{equation*}

\begin{equation*}
\begin{split}
&44\sum_{j = 1}^n {L_j L_{j + 1} L_{j + 2} L_{j + 3} L_{j + 4} }\\
&\qquad  =  - L_{n + 5}^5  + 7L_{n + 4}^5  + 47L_{n + 3}^5  + 31L_{n + 2}^5  - 9L_{n + 1}^5  - L_n^5  - 1100L_{n + 4}  - 4518
\end{split}
\end{equation*}
and
\begin{equation*}
\begin{split}
&( - 1)^{n - 1} 44\sum_{j = 1}^n {( - 1)^{j - 1} L_j L_{j + 1} L_{j + 2} L_{j + 3} L_{j + 4} }\\
&\qquad =  - L_{n + 5}^5  + 9L_{n + 4}^5  + 31L_{n + 3}^5  - 47L_{n + 2}^5  + 7L_{n + 1}^5  + L_n^5  -1100L_{n + 1} - ( - 1)^n 9474.
\end{split}
\end{equation*}
We will also derive the following identities as special cases of more general results:
\begin{align*}
\sum_{j = 1}^n {2^j F_j F_{j + 1} F_{j + 2}^2 F_{j + 3} }  &= \frac{2^{n + 1}}5 F_n F_{n + 1} F_{n + 2} F_{n + 3} F_{n + 4}, \\
\sum_{j = 1}^n {( - 1)^{j - 1} 3^j F_j F_{j + 1} F_{j + 2} F_{j + 3}^2 }  &= \frac{( - 1)^{n + 1} 3^{n + 1}}5 F_n F_{n + 1} F_{n + 2} F_{n + 3} F_{n + 4},\\
\sum_{j = 1}^n {3^{n - j} F_{5j + 1} }  &= \frac{1}{5}F_{5n + 5}  - 3^n.
\end{align*}
We will bring the study to a close by deriving some reciprocal sums for the gibonacci sequence, with the following special cases:
\begin{align*}
\sum_{j = 1}^n {\frac{{3^{j - 1} }}{{F_j F_{j + 2} F_{j + 3} F_{j + 4} F_{j + 5} }}} & =  - \frac{1}{5}\frac{{3^n }}{{F_{n + 1} F_{n + 2} F_{n + 3} F_{n + 4} F_{n + 5} }} + \frac{1}{{150}},\\
\sum_{j = 1}^n {\frac{{( - 1)^{j - 1} F_{j + 6} }}{{2^{3j} F_j F_{j + 1} F_{j + 2} F_{j + 3} F_{j + 4} F_{j + 5} }}}  &= \frac{1}{5}\frac{{( - 1)^{n + 1} }}{{2^{3n} F_{n + 1} F_{n + 2} F_{n + 3} F_{n + 4} F_{n + 5} }} + \frac{1}{{150}};
\end{align*}
with the limiting cases
\begin{align*}
\sum_{j = 1}^\infty {\frac{{3^{j - 1} }}{{F_j F_{j + 2} F_{j + 3} F_{j + 4} F_{j + 5} }}} & =\frac{1}{{150}},\\
\sum_{j = 1}^\infty {\frac{{( - 1)^{j - 1} F_{j + 6} }}{{2^{3j} F_j F_{j + 1} F_{j + 2} F_{j + 3} F_{j + 4} F_{j + 5} }}}  &=\frac{1}{{150}}.
\end{align*}
\section{Preliminary results}
The Gelin-Ces\`aro identity
\begin{equation*}
F_j^4  - F_{j - 2} F_{j - 1} F_{j + 1} F_{j + 2}  = 1
\end{equation*}
readily extends to the gibonacci sequence as
\begin{equation}\label{gelin}
G_j^4  - G_{j - 2} G_{j - 1} G_{j + 1} G_{j + 2}  = \lambda^2,
\end{equation}
where, here and throughout this paper, $\lambda=G_1^2 - G_0G_2$; since the identity (Vajda~\cite[Formula (28)]{vajda}):
\begin{equation}
G_jG_{j + 2}=G_{j + 1}^2 - (-1)^j\lambda
\end{equation}
implies
\begin{equation*}
\begin{split}
G_{j + 1} G_{j + 2}  &= (G_{j + 2}  - G_j )G_{j + 2} \\
 &= G_{j + 2}^2  - G_j G_{j + 2} \\
 &= G_{j + 2}^2  - G_{j + 1}^2  + ( - 1)^j \lambda \\
 &= G_j G_{j + 3}  + ( - 1)^j \lambda ;
\end{split}
\end{equation*}
or
\begin{equation}
G_jG_{j + 3}=G_{j + 1}G_{j + 2} - (-1)^j\lambda;
\end{equation}
so that
\begin{equation*}
\begin{split}
G_{j - 2} G_{j - 1} G_{j + 1} G_{j + 2} & = \left( {G_{j - 1} G_{j + 2} } \right)\left( {G_{j - 2} G_{j + 1} } \right)\\
& = \left( {G_j G_{j + 1}  + ( - 1)^j \lambda } \right)\left( {G_{j - 1} G_j  - ( - 1)^j \lambda } \right)\\
& = G_{j - 1} G_j^2 G_{j + 1}  - ( - 1)^j G_j^2 \lambda  - \lambda ^2\\ 
& = G_j^2 \left( {G_{j - 1} G_{j + 1}  - ( - 1)^j \lambda } \right) - \lambda ^2 \\
& = G_j^4  - \lambda ^2 .
\end{split}
\end{equation*}
Identity~\eqref{gelin} is also a special case of Horadam and Shannon~\cite[Identity~(2.5)]{horadam87} with $p=1$, $q=-1$ and $e=ab + a^2 - b^2$.

For arbitrary numbers $c$ and $d$, \eqref{gelin} implies
\begin{equation}\label{eq.w35pczi}
G_{j - 1} G_j G_{j + 1} G_{j + 2} \left( {cG_{j + 3}  - dG_{j - 2} } \right) = cG_{j + 1}^5  - dG_j^5  - \lambda ^2 \left( {cG_{j + 1}  - dG_j } \right).
\end{equation}
Setting $k=j - 2$ and $r=5$ in the identity (see Howard~\cite[Corollary 3.5]{howard})
\begin{equation}
F_{m + 2}G_{k + r} + (-1)^{r - 1}F_{m - r + 2}G_k = F_r G_{k + m + 2}
\end{equation}
gives
\begin{equation}
F_{m + 2}G_{j + 3} + F_{m - 3}G_{j - 2}=5G_{j + m};
\end{equation}
so that with $c=F_{m + 2}$ and $d=-F_{m - 3}$, \eqref{eq.w35pczi} becomes
\begin{equation}\label{main}
5G_{j - 1} G_j G_{j + 1} G_{j + 2} G_{j + m}  = F_{m + 2}G_{j + 1}^5  + F_{m - 3} G_j^5  - \lambda ^2 \left( {F_{m + 2}G_{j + 1}  + F_{m - 3} G_j } \right).
\end{equation}
We see from \eqref{main} that in order to evaluate the sum of the products of five gibonacci numbers, we  require both the sum of the gibonacci numbers, namely $\sum_{j=1}^n{(\pm1)^{j - 1}G_j}$, and the sum of the fifth powers of the gibonacci numbers; that is $\sum_{j=1}^n{(\pm1)^{j - 1}G_j^5}$.
\subsection{Sum and alternating sum of gibonacci numbers}
Since $G_{j + t}= G_{j + t + 2} - G_{j + t + 1}$, the identity
\begin{equation}\label{scope1}
\sum_{j = 1}^n {(f_{j + 1}  - f_j )}  = f_{n + 1}  - f_1,
\end{equation}
which is valid for any sequence $(f_k)$, gives
\begin{equation}\label{eq.linsum}
\sum_{j=1}^n G_{j + t}=G_{n + t + 2} - G_{t + 2},
\end{equation}
while summing $G_{j + t}=G_{j + t - 1} + G_{j + t - 2}$, using the identity
\begin{equation}\label{scope2}
\sum_{j = 1}^n {( - 1)^{j - 1} (f_{j + 1}  + f_j )}  = ( - 1)^{n + 1} f_{n + 1}  + f_1,
\end{equation}
produces
\begin{equation}\label{eq.linsum2}
\sum_{j=1}^n {(-1)^{j - 1}G_{j + t}}=(-1)^{n + 1}G_{n + t - 1} + G_{t - 1}.
\end{equation}
\subsection{Sum and alternating sum of the fifth powers of gibonacci numbers}
The identity
\begin{equation}\label{degree5}
G_{j + 3}^5  = 8G_{j + 2}^5  + 40G_{j + 1}^5  - 60G_j^5  - 40G_{j - 1}^5  + 8G_{j - 2}^5  + G_{j - 3}^5,
\end{equation}
derived by Brousseau~\cite[Equation (5)]{brousseau68} can be arranged as
\begin{equation}\label{arranged1}
\begin{split}
44G_j^5  &=  - \left( {G_{j + 3}^5  - G_{j + 2}^5 } \right) + 7\left( {G_{j + 2}^5  - G_{j + 1}^5 } \right) + 47\left( {G_{j + 1}^5  - G_j^5 } \right)\\
&\qquad\qquad + 31\left( {G_j^5  - G_{j - 1}^5 } \right) - 9\left( {G_{j - 1}^5  - G_{j - 2}^5 } \right) - \left( {G_{j - 2}^5  - G_{j - 3}^5 } \right),
\end{split}
\end{equation}
and also as
\begin{equation}\label{arranged2}
\begin{split}
44G_j^5  &=  - \left( {G_{j + 3}^5  + G_{j + 2}^5 } \right) + 9\left( {G_{j + 2}^5  + G_{j + 1}^5 } \right) + 31\left( {G_{j + 1}^5  + G_j^5 } \right)\\
&\qquad\qquad - 47\left( {G_j^5  + G_{j - 1}^5 } \right) + 7\left( {G_{j - 1}^5  + G_{j - 2}^5 } \right) + \left( {G_{j - 2}^5  + G_{j - 3}^5 } \right).
\end{split}
\end{equation}
Although Brousseau derived~\eqref{degree5} specifically for the Fibonacci sequence, the identity is valid for any sequence obeying the Fibonacci recurrence since only the recurrence relation featured in the derivation.

With the aide of the summation formulas~\eqref{scope1} and~\eqref{scope2}, \eqref{arranged1} and \eqref{arranged2} lead to the results stated in Proposition~\ref{lem.deg5sums}.
\begin{proposition}\label{lem.deg5sums}
If $n$ and $t$ are integers, then
\begin{equation}\label{eq.deg5sums1}
\begin{split}
44\sum_{j = 1}^n {G_{j + t}^5 }  &=  - \left( {G_{n + t + 3}^5  - G_{t + 3}^5 } \right) + 7\left( {G_{n + t + 2}^5  - G_{t + 2}^5 } \right) + 47\left( {G_{n + t + 1}^5  - G_{t + 1}^5 } \right)\\
&\qquad\qquad+ 31\left( {G_{n + t}^5  - G_t^5 } \right) - 9\left( {G_{n + t - 1}^5  - G_{t - 1}^5 } \right) - \left( {G_{n + t - 2}^5  - G_{t - 2}^5 } \right)
\end{split}
\end{equation}
and
\begin{equation}\label{eq.deg5sums2}
\begin{split}
44\sum_{j = 1}^n {(-1)^{j - 1}G_{j + t}^5 }  &=  - \left( {(-1)^{n + 1}G_{n + t + 3}^5  + G_{t + 3}^5 } \right) + 9\left( {(-1)^{n + 1}G_{n + t + 2}^5  + G_{t + 2}^5 } \right)\\
&\qquad\quad + 31\left( {(-1)^{n + 1}G_{n + t + 1}^5  + G_{t + 1}^5 } \right)- 47\left( {(-1)^{n + 1}G_{n + t}^5  + G_t^5 } \right)\\
 &\qquad\qquad\quad + 7\left( {(-1)^{n + 1}G_{n + t - 1}^5  + G_{t - 1}^5 } \right) + \left( {(-1)^{n + 1}G_{n + t - 2}^5  + G_{t - 2}^5 } \right).
\end{split}
\end{equation}

\end{proposition}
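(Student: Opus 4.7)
The plan is to derive both identities by shifting the index and applying the telescoping formulas \eqref{scope1} and \eqref{scope2} term by term to the rearrangements \eqref{arranged1} and \eqref{arranged2} of Brousseau's identity. All the substantive algebraic work is already encapsulated in \eqref{degree5} and its two rearrangements; what remains is essentially bookkeeping.

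For \eqref{eq.deg5sums1}, I would replace $j$ with $j+t$ throughout \eqref{arranged1} and sum both sides from $j=1$ to $n$. Each of the six grouped terms on the right has the form $c_k(G_{j+t+k+1}^5 - G_{j+t+k}^5)$ for $k \in \{2,1,0,-1,-2,-3\}$ with coefficients $c_k \in \{-1,7,47,31,-9,-1\}$. Setting $f_j = G_{j+t+k}^5$, each block telescopes via \eqref{scope1} to $c_k(G_{n+t+k+1}^5 - G_{t+k+1}^5)$, and collecting the six contributions reproduces exactly the right-hand side of \eqref{eq.deg5sums1}.

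For \eqref{eq.deg5sums2}, I would similarly shift \eqref{arranged2} by $t$, multiply both sides by $(-1)^{j-1}$, and sum from $j=1$ to $n$. Now each of the six blocks has the form $c_k(-1)^{j-1}(G_{j+t+k+1}^5 + G_{j+t+k}^5)$ with coefficients $c_k \in \{-1,9,31,-47,7,1\}$; by \eqref{scope2} with $f_j = G_{j+t+k}^5$, each block telescopes to $c_k\bigl((-1)^{n+1}G_{n+t+k+1}^5 + G_{t+k+1}^5\bigr)$. Assembling the six contributions yields the stated formula.

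There is no conceptual obstacle; the only risk is an index or sign slip, so the care needed is in matching the coefficient $c_k$ on the left-hand block with the correct pair $(k{+}1,\,k)$ of shifted indices on the right. In particular, I would double-check that in \eqref{arranged1} the leading block $-(G_{j+3}^5 - G_{j+2}^5)$ produces $-(G_{n+t+3}^5 - G_{t+3}^5)$ (not $-G_{n+t+3}^5 + G_{t+2}^5$), and similarly that in \eqref{arranged2} the leading block yields $-\bigl((-1)^{n+1}G_{n+t+3}^5 + G_{t+3}^5\bigr)$; once these endpoints are verified, the remaining five blocks follow by the same template.
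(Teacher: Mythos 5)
Your proposal is correct and is precisely the paper's own argument: the paper states that \eqref{eq.deg5sums1} and \eqref{eq.deg5sums2} follow from applying the telescoping formulas \eqref{scope1} and \eqref{scope2} to the rearrangements \eqref{arranged1} and \eqref{arranged2} of Brousseau's identity \eqref{degree5}, which is exactly your blockwise telescoping with $f_j = G_{j+t+k}^5$. The endpoint checks you flag are the right ones, and they come out as you state.
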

In particular,
\begin{align}
44\sum_{j = 1}^n {F_j^5 }  &=  - F_{n + 3}^5  + 7F_{n + 2}^5  + 47F_{n + 1}^5  + 31F_n^5  - 9F_{n - 1}^5  - F_{n - 2}^5  - 14,\\
44\sum_{j = 1}^n {L_j^5 }  &=  - L_{n + 3}^5  + 7L_{n + 2}^5  + 47L_{n + 1}^5  + 31L_n^5  - 9L_{n - 1}^5  - L_{n - 2}^5  - 1482,\\
( - 1)^{n - 1} 44\sum_{j = 1}^n {( - 1)^{j - 1} F_j^5 }  &=  - F_{n + 3}^5  + 9F_{n + 2}^5  + 31F_{n + 1}^5  - 47F_n^5  + 7F_{n - 1}^5  + F_{n - 2}^5  + ( - 1)^{n + 1} 14
\end{align}
and
\begin{equation}
( - 1)^{n - 1} 44\sum_{j = 1}^n {( - 1)^{j - 1} L_j^5 }  =  - L_{n + 3}^5  + 9L_{n + 2}^5  + 31F_{n + 1}^5  - 47L_n^5  + 7L_{n - 1}^5  + L_{n - 2}^5  + ( - 1)^n 74.
\end{equation}
\section{Main results}
\begin{thm}\label{main1}
If $n$, $t$ and $m$ are any integers, then
\begin{equation}\label{eq.main1}
\begin{split}
&220\sum_{j = 1}^n {G_{j + t - 1} G_{j + t} G_{j + t + 1} G_{j + t + 2} G_{j + t + m} }\\ 
&\qquad = \left( {F_{m + 2}  + F_{m - 3} } \right)\left( { - \left( {G_{n + t + 3}^5  - G_{t + 3}^5 } \right) + 7\left( {G_{n + t + 2}^5  - G_{t + 2}^5 } \right) + 47\left( {G_{n + t + 1}^5  - G_{t + 1}^5 } \right)} \right)\\
&\qquad\quad + \left( {F_{m + 2}  + F_{m - 3} } \right)\left( {31\left( {G_{n + t}^5  - G_t^5 } \right) - 9\left( {G_{n + t - 1}^5  - G_{t - 1}^5 } \right) - \left( {G_{n + t - 2}^5  - G_{t - 2}^5 } \right)} \right)\\
&\qquad\qquad - 44\lambda ^2 \left( {F_{m + 2}  + F_{m - 3} } \right)\left( {G_{n + t + 2}  - G_{t + 2} } \right) - 44F_{m + 2} \left( {G_{t + 1}^5  - \lambda ^2 G_{t + 1} } \right)\\
&\qquad\qquad\quad + 44F_{m + 2} \left( {G_{n + t + 1}^5  - \lambda ^2 G_{n + t + 1} } \right)
\end{split}
\end{equation}
and
\begin{equation}\label{eq.main2}
\begin{split}
&220\sum_{j = 1}^n {(-1)^{j - 1}G_{j + t - 1} G_{j + t} G_{j + t + 1} G_{j + t + 2} G_{j + t + m} }\\
&\qquad= -\left( {F_{m + 2}  - F_{m - 3} } \right)\left( - \left( {(-1)^{n + 1}G_{n + t + 3}^5  + G_{t + 3}^5 } \right) + 9\left( {(-1)^{n + 1}G_{n + t + 2}^5  + G_{t + 2}^5 } \right)\right)\\
&\qquad\quad - \left( {F_{m + 2}  - F_{m - 3} } \right)\left(31\left( {(-1)^{n + 1}G_{n + t + 1}^5  + G_{t + 1}^5 } \right)- 47\left( {(-1)^{n + 1}G_{n + t}^5  + G_t^5 } \right)\right)\\
 &\qquad\qquad\quad - \left( {F_{m + 2}  - F_{m - 3} } \right)\left(7\left( {(-1)^{n + 1}G_{n + t - 1}^5  + G_{t - 1}^5 } \right) + \left( {(-1)^{n + 1}G_{n + t - 2}^5  + G_{t - 2}^5 } \right)\right)\\
&\qquad\qquad + 44\lambda ^2 \left( {F_{m + 2}  - F_{m - 3} } \right)\left( {(-1)^{n + 1}G_{n + t - 1}  + G_{t - 1} } \right) + 44F_{m + 2} \left( {G_{t + 1}^5  - \lambda ^2 G_{t + 1} } \right)\\
&\qquad\qquad\quad + 44F_{m + 2} (-1)^{n + 1}\left( {G_{n + t + 1}^5  - \lambda ^2 G_{n + t + 1} } \right).
\end{split}
\end{equation}
\end{thm}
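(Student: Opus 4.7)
The entire theorem is a direct bookkeeping consequence of the key algebraic identity \eqref{main} combined with the closed forms already established in Proposition~\ref{lem.deg5sums} and the linear-sum identities \eqref{eq.linsum}, \eqref{eq.linsum2}. The plan is therefore: shift \eqref{main} by $t$, sum, collapse the two resulting fifth-power sums into a single one, and substitute the known closed forms.

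First, I would replace $j$ by $j+t$ in \eqref{main} to obtain
\begin{equation*}
5G_{j+t-1}G_{j+t}G_{j+t+1}G_{j+t+2}G_{j+t+m} = F_{m+2}G_{j+t+1}^{5} + F_{m-3}G_{j+t}^{5} - \lambda^{2}\bigl(F_{m+2}G_{j+t+1} + F_{m-3}G_{j+t}\bigr).
\end{equation*}
Summing this from $j=1$ to $n$ (respectively with alternating sign $(-1)^{j-1}$) reduces everything to four standard sums: $\sum G_{j+t}^{5}$, $\sum G_{j+t+1}^{5}$, $\sum G_{j+t}$ and $\sum G_{j+t+1}$ (or their alternating analogues).

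Next I would use the elementary reindexing trick that handles the mismatch between the $t$ and $t+1$ shifts. For the non-alternating case,
\begin{equation*}
\sum_{j=1}^{n}G_{j+t+1}^{5} = \sum_{j=1}^{n}G_{j+t}^{5} + G_{n+t+1}^{5} - G_{t+1}^{5},
\end{equation*}
so the combined fifth-power contribution becomes $(F_{m+2}+F_{m-3})\sum_{j=1}^{n}G_{j+t}^{5} + F_{m+2}\bigl(G_{n+t+1}^{5}-G_{t+1}^{5}\bigr)$; the identical manoeuvre on the linear terms produces $(F_{m+2}+F_{m-3})(G_{n+t+2}-G_{t+2}) + F_{m+2}(G_{n+t+1}-G_{t+1})$ via \eqref{eq.linsum}. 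Multiplying throughout by $44$ and plugging in \eqref{eq.deg5sums1} delivers \eqref{eq.main1} after collecting terms.

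For the alternating sum \eqref{eq.main2} I would run exactly the same argument, but now the index shift introduces an extra sign:
\begin{equation*}
\sum_{j=1}^{n}(-1)^{j-1}G_{j+t+1}^{5} = -\sum_{j=1}^{n}(-1)^{j-1}G_{j+t}^{5} + (-1)^{n+1}G_{n+t+1}^{5} + G_{t+1}^{5},
\end{equation*}
which turns the combined coefficient $F_{m+2}+F_{m-3}$ into $-(F_{m+2}-F_{m-3})$ and accounts for the overall minus sign in front of the bracketed expression in \eqref{eq.main2}. Feeding \eqref{eq.deg5sums2} and \eqref{eq.linsum2} into the resulting expression yields \eqref{eq.main2}. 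The only real obstacle is clerical: keeping the six telescoping fifth-power differences, the two linear terms and the parity sign $(-1)^{n+1}$ straight when collecting the final form; no new algebraic idea is required beyond the index-shift identity above.
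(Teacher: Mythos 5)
Your proposal is correct and follows essentially the same route as the paper: sum the shifted identity \eqref{main} (plain or with the factor $(-1)^{j-1}$), absorb the $G_{j+t+1}$-indexed sums into the $G_{j+t}$-indexed ones at the cost of the boundary terms $F_{m+2}\bigl(G_{n+t+1}^5-G_{t+1}^5\bigr)$ (with the sign flip $F_{m+2}+F_{m-3}\mapsto-(F_{m+2}-F_{m-3})$ in the alternating case), and then substitute \eqref{eq.linsum}, \eqref{eq.linsum2}, \eqref{eq.deg5sums1} and \eqref{eq.deg5sums2}. The paper simply states the intermediate summed identity without spelling out the reindexing step that you make explicit; there is no substantive difference.
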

\begin{proof}
Summing ~\eqref{main} gives
\begin{equation}
\begin{split}
&5\sum_{j = 1}^n {G_{j + t - 1} G_{j + t} G_{j + t + 1} G_{j + t + 2} G_{j + t + m} } \\
&\qquad = \left( {F_{m + 2}  + F_{m - 3} } \right)\left( {\sum_{j = 1}^n {G_{j + t}^5 }  - \lambda ^2 \sum_{j = 1}^n {G_{j + t} } } \right)\\
&\qquad\quad - F_{m + 2} \left( {G_{t + 1}^5  - \lambda ^2 G_{t + 1} } \right) + F_{m + 2} \left( {G_{n + t + 1}^5  - \lambda ^2 G_{n + t + 1} } \right),
\end{split}
\end{equation}
from which~\eqref{eq.main1} follows upon using~\eqref{eq.linsum} and~\eqref{eq.deg5sums1}.

Similarly, mutiplying through~\eqref{main} by $(-1)^{j - 1}$ and summing, we have
\begin{equation}
\begin{split}
&5\sum_{j = 1}^n {(-1)^{j - 1}G_{j + t - 1} G_{j + t} G_{j + t + 1} G_{j + t + 2} G_{j + t + m} } \\
&\qquad = -\left( {F_{m + 2}  - F_{m - 3} } \right)\left( {\sum_{j = 1}^n {(-1)^{j - 1}G_{j + t}^5 }  - \lambda ^2 \sum_{j = 1}^n {(-1)^{j - 1}G_{j + t} } } \right)\\
&\qquad\quad + F_{m + 2} \left( {G_{t + 1}^5  - \lambda ^2 G_{t + 1} } \right) + (-1)^{n + 1}F_{m + 2} \left( {G_{n + t + 1}^5  - \lambda ^2 G_{n + t + 1} } \right),
\end{split}
\end{equation}
from which~\eqref{eq.main2} results upon inserting~\eqref{eq.linsum2} and~\eqref{eq.deg5sums2}.
\end{proof}
We list some examples from the Theorem.
\begin{equation}\label{eq.fib}
\begin{split}
&[t=2,m=-2]: \\
&44\sum_{j = 1}^n {F_j F_{j + 1} F_{j + 2} F_{j + 3} F_{j + 4} }\\
&\qquad  =  - F_{n + 5}^5  + 7F_{n + 4}^5  + 47F_{n + 3}^5  + 31F_{n + 2}^5  - 9F_{n + 1}^5  - F_n^5  - 44F_{n + 4}  + 30,
\end{split}
\end{equation}

\begin{equation}
\begin{split}
&[t=1,m=-1]: \\
&110\sum_{j = 1}^n {F_j^2 F_{j + 1} F_{j + 2} F_{j + 3} } \\
&\qquad= F_{n + 4}^5  - 7F_{n + 3}^5  - 25F_{n + 2}^5  - 31F_{n + 1}^5  + 9F_n^5  + F_{n - 1}^5  + 22L_{n + 2}  - 30,
\end{split}
\end{equation}

\begin{equation}
\begin{split}
&[t=1,m=0]: \\
&220\sum_{j = 1}^n {F_j F_{j + 1}^2 F_{j + 2} F_{j + 3} }\\
&\qquad=  - 3F_{n + 4}^5  + 21F_{n + 3}^5  + 185F_{n + 2}^5  + 93F_{n + 1}^5  - 27F_n^5  - 3F_{n - 1}^5  - 44L_{n + 4}  + 90,
\end{split}
\end{equation}

\begin{equation}
\begin{split}
&[t=1,m=1]:\\
&220\sum_{j = 1}^n {F_j F_{j + 1} F_{j + 2}^2 F_{j + 3} } \\
&\qquad =  - F_{n + 4}^5  + 7F_{n + 3}^5  + 135F_{n + 2}^5  + 31F_{n + 1}^5  - 9F_n^5  - F_{n - 1}^5  - 44L_{n + 3}  + 30,
\end{split}
\end{equation}

\begin{equation}
\begin{split}
&[t=1,m=2]:\\
&55\sum_{j = 1}^n {F_j F_{j + 1} F_{j + 2} F_{j + 3}^2 } \\
&\qquad=  - F_{n + 4}^5  + 7F_{n + 3}^5  + 80F_{n + 2}^5  + 31F_{n + 1}^5  - 9F_n^5  - F_{n - 1}^5  - 44F_{n + 3}  - 33F_{n + 2}  + 30,
\end{split}
\end{equation}

\begin{equation}
\begin{split}
&[t=2,m=-2]:\\
&( - 1)^{n - 1} 44\sum_{j = 1}^n {( - 1)^{j - 1} F_j F_{j + 1} F_{j + 2} F_{j + 3} F_{j + 4} }\\
&\qquad =  - F_{n + 5}^5  + 9F_{n + 4}^5  + 31F_{n + 3}^5  - 47F_{n + 2}^5  + 7F_{n + 1}^5  + F_n^5  -44F_{n + 1} + ( - 1)^n 30,
\end{split}
\end{equation}

\begin{equation}
\begin{split}
&[t=1,m=-1]:\\
&( - 1)^{n - 1} 55\sum_{j = 1}^n {( - 1)^{j - 1} F_j^2 F_{j + 1} F_{j + 2} F_{j + 3} } \\
&\qquad= F_{n + 4}^5  - 9F_{n + 3}^5  - 20F_{n + 2}^5  + 47F_{n + 1}^5  - 7F_n^5  - F_{n - 1}^5  + 11L_{n - 1}  + ( - 1)^n 30,
\end{split}
\end{equation}

\begin{equation}
\begin{split}
&[t=1,m=0]:\\
&( - 1)^{n - 1} 220\sum_{j = 1}^n {( - 1)^{j - 1} F_j F_{j + 1}^2 F_{j + 2} F_{j + 3} } \\
&\qquad=  - F_{n + 4}^5  + 9F_{n + 3}^5  + 75F_{n + 2}^5  - 47F_{n + 1}^5  + 7F_n^5  + F_{n - 1}^5  - 44L_{n + 1}  - ( - 1)^n 30,
\end{split}
\end{equation}

\begin{equation}
\begin{split}
&[t=1,m=1]:\\
&( - 1)^{n - 1} 220\sum_{j = 1}^n {( - 1)^{j - 1} F_j F_{j + 1} F_{j + 2}^2 F_{j + 3} } \\
&\qquad= 3F_{n + 4}^5  - 27F_{n + 3}^5  - 5F_{n + 2}^5  + 141F_{n + 1}^5  - 21F_n^5  - 3F_{n - 1}^5  - 44L_n  + ( - 1)^n 90,
\end{split}
\end{equation}

\begin{equation}
\begin{split}
&[t=1,m=2]:\\
&( - 1)^{n - 1} 110\sum_{j = 1}^n {( - 1)^{j - 1} F_j F_{j + 1} F_{j + 2} F_{j + 3}^2 } \\
&\qquad = F_{n + 4}^5  - 9F_{n + 3}^5  + 35F_{n + 2}^5  + 47F_{n + 1}^5  - 7F_n^5  - F_{n - 1}^5  - 22L_{n + 2}  + ( - 1)^n 30,
\end{split}
\end{equation}

\begin{equation}\label{eq.luc}
\begin{split}
&[t=2,m=-2]: \\
&44\sum_{j = 1}^n {L_j L_{j + 1} L_{j + 2} L_{j + 3} L_{j + 4} }\\
&\qquad  =  - L_{n + 5}^5  + 7L_{n + 4}^5  + 47L_{n + 3}^5  + 31L_{n + 2}^5  - 9L_{n + 1}^5  - L_n^5  - 1100L_{n + 4}  - 4518,
\end{split}
\end{equation}

\begin{equation}
\begin{split}
&[t=1,m=-1]: \\
&110\sum_{j = 1}^n {L_j^2 L_{j + 1} L_{j + 2} L_{j + 3} } \\
&\qquad= L_{n + 4}^5  - 7L_{n + 3}^5  - 25L_{n + 2}^5  - 31L_{n + 1}^5  + 9L_n^5  + L_{n - 1}^5  + 2750F_{n + 2}  - 6570,
\end{split}
\end{equation}

\begin{equation}
\begin{split}
&[t=2,m=-2]:\\
&( - 1)^{n - 1} 44\sum_{j = 1}^n {( - 1)^{j - 1} L_j L_{j + 1} L_{j + 2} L_{j + 3} L_{j + 4} }\\
&\qquad =  - L_{n + 5}^5  + 9L_{n + 4}^5  + 31L_{n + 3}^5  - 47L_{n + 2}^5  + 7L_{n + 1}^5  + L_n^5  -1100L_{n + 1} - ( - 1)^n 9474,
\end{split}
\end{equation}

\begin{equation}
\begin{split}
&[t=1,m=-1]:\\
&( - 1)^{n - 1} 55\sum_{j = 1}^n {( - 1)^{j - 1} L_j^2 L_{j + 1} L_{j + 2} L_{j + 3} } \\
&\qquad= L_{n + 4}^5  - 9L_{n + 3}^5  - 20L_{n + 2}^5  + 47L_{n + 1}^5  - 7L_n^5  - L_{n - 1}^5  + 1375F_{n - 1}  - ( - 1)^n 3930.
\end{split}
\end{equation}

\begin{proposition}\label{prop.lhpm2ek}
If $n$, $t$ and $m$ are any integers, then
\begin{equation}\label{eq.uiaqqqw}
\begin{split}
&5\sum_{j = 1}^n {(-F_{m - 3})^{n - j} ( F_{m + 2} )^j G_{j + t - 1} G_{j + t} G_{j + t + 1} G_{j + t + 2} G_{j + t + m} } \\
&\qquad = F_{m + 2}^{n + 1} G_{n + t + 1}^5  - ( - F_{m - 3} )^n F_{m + 2} G_{t + 1}^5  - \lambda ^2 \left( {F_{m + 2}^{n + 1} G_{n + t + 1}  - ( - F_{m - 3} )^n F_{m + 2} G_{t + 1} } \right).
\end{split}
\end{equation}
\end{proposition}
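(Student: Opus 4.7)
The plan is to multiply the fundamental identity \eqref{main} (shifted by $t$) by the weight $(-F_{m-3})^{n-j}(F_{m+2})^j$ and then observe that the resulting right-hand side telescopes when summed.

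First I would replace $j$ by $j+t$ in \eqref{main} to obtain
\begin{equation*}
5G_{j+t-1}G_{j+t}G_{j+t+1}G_{j+t+2}G_{j+t+m} = F_{m+2}G_{j+t+1}^5 + F_{m-3}G_{j+t}^5 - \lambda^2\bigl(F_{m+2}G_{j+t+1} + F_{m-3}G_{j+t}\bigr).
\end{equation*}
Multiplying through by $(-F_{m-3})^{n-j}(F_{m+2})^j$ and using the easy rewrites $F_{m+2}\cdot (F_{m+2})^j = (F_{m+2})^{j+1}$ and $F_{m-3}\cdot(-F_{m-3})^{n-j} = -(-F_{m-3})^{n-j+1}$ gives, after summing from $j=1$ to $n$,
\begin{equation*}
\begin{split}
&5\sum_{j=1}^n (-F_{m-3})^{n-j}(F_{m+2})^j G_{j+t-1}G_{j+t}G_{j+t+1}G_{j+t+2}G_{j+t+m}\\
&\qquad = \sum_{j=1}^n \bigl[(F_{m+2})^{j+1}(-F_{m-3})^{n-j}G_{j+t+1}^5 - (-F_{m-3})^{n-j+1}(F_{m+2})^j G_{j+t}^5\bigr]\\
&\qquad\quad - \lambda^2\sum_{j=1}^n \bigl[(F_{m+2})^{j+1}(-F_{m-3})^{n-j}G_{j+t+1} - (-F_{m-3})^{n-j+1}(F_{m+2})^j G_{j+t}\bigr].
\end{split}
\end{equation*}

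The key observation is that both bracketed sums on the right telescope. Define $a_k := (F_{m+2})^k(-F_{m-3})^{n-k+1}G_{k+t}^5$ and $b_k := (F_{m+2})^k(-F_{m-3})^{n-k+1}G_{k+t}$. Then the first bracket equals $a_{j+1}-a_j$ and the second equals $b_{j+1}-b_j$, so by \eqref{scope1} the sums collapse to $a_{n+1}-a_1$ and $b_{n+1}-b_1$ respectively. Evaluating gives $a_{n+1} = (F_{m+2})^{n+1}G_{n+t+1}^5$, $a_1 = (-F_{m-3})^n F_{m+2} G_{t+1}^5$, and the analogous expressions for $b$, which when substituted yields exactly \eqref{eq.uiaqqqw}.

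The only real obstacle is keeping the exponents of $F_{m+2}$ and $-F_{m-3}$ properly aligned so that the telescoping is visible; once the weight is split as $(F_{m+2})^j\cdot(-F_{m-3})^{n-j}$ and the prefactors $F_{m+2}$ and $F_{m-3}$ from \eqref{main} are absorbed into the appropriate power, the argument is purely mechanical.
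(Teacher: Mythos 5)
Your proof is correct and follows essentially the same route as the paper: the paper simply invokes its prepackaged telescoping identity \eqref{tele_general} with $c=F_{m+2}$ and $d=-F_{m-3}$ applied to \eqref{main}, which is exactly the telescoping you carry out explicitly via the sequences $a_k$ and $b_k$. The exponent bookkeeping and the endpoint evaluations $a_{n+1}$, $a_1$ all check out.
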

\begin{proof}
Apply the general telescoping summation identity
\begin{equation}\label{tele_general}
\sum_{j = 1}^n {d^{n - j} c^{j - 1} \left( {c\,f_{j + 1}  - d\,f_j } \right)}  = c^n f_{n + 1}  - d^n f_1
\end{equation}
to identity~\eqref{main} with $c=F_{m + 2}$ and $d=-F_{m - 3}$.
\end{proof}
On account of~\eqref{gelin}, identity~\eqref{eq.uiaqqqw} can also be written in the $\lambda$-free form
\begin{equation}\label{eq.r3aho2z}
\begin{split}
&5\sum_{j = 1}^n {(-F_{m - 3})^{n - j} ( F_{m + 2} )^j G_{j + t - 1} G_{j + t} G_{j + t + 1} G_{j + t + 2} G_{j + t + m} } \\
&\qquad = F_{m + 2}^{n + 1} G_{n + t - 1} G_{n + t} G_{n + t + 1} G_{n + t + 2} G_{n + t + 3}  - ( - 1)^n F_{m - 3}^n F_{m + 2} G_{t - 1} G_t G_{t + 1} G_{t + 2} G_{t + 3} .
\end{split}
\end{equation}
Here are some exampes from Proposition~\ref{prop.lhpm2ek}.
\begin{align}
5\sum_{j = 1}^n {2^j F_j F_{j + 1} F_{j + 2}^2 F_{j + 3} }  &= 2^{n + 1} F_n F_{n + 1} F_{n + 2} F_{n + 3} F_{n + 4},\quad [t=1,m=1], \\
5\sum_{j = 1}^n {( - 1)^{j - 1} 3^j F_j F_{j + 1} F_{j + 2} F_{j + 3}^2 }  &= ( - 1)^{n + 1} 3^{n + 1} F_n F_{n + 1} F_{n + 2} F_{n + 3} F_{n + 4},\quad [t=1,m=2],\\
5\sum_{j = 1}^n {2^j L_j L_{j + 1} L_{j + 2}^2 L_{j + 3} }  &= 2^{n + 1} L_n L_{n + 1} L_{n + 2} L_{n + 3} L_{n + 4} - 336,\quad [t=1,m=1], \\
5\sum_{j = 1}^n {( - 1)^{j - 1} 3^j L_j L_{j + 1} L_{j + 2} L_{j + 3}^2 }  &= ( - 1)^{n + 1} 3^{n + 1} L_n L_{n + 1} L_{n + 2} L_{n + 3} L_{n + 4} + 504,\quad [t=1,m=2].
\end{align}
\begin{proposition}
If $m$, $n$, $s$ and $t$ are any integers, then
\begin{equation}
5\sum_{j = 1}^n {( - F_{m - 5} )^{n - j} F_m^{j - 1} G_{5(j + t) + m + s} }  = F_m^n G_{5(n + t + 1) + s}  - ( - 1)^n F_{m - 5}^n G_{5(t + 1) + s}.
\end{equation}
\end{proposition}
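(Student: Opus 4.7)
The plan is to reduce the statement to a straight application of the telescoping identity \eqref{tele_general} that was used in the proof of Proposition~\ref{prop.lhpm2ek}. The preparatory step is to generate, from Howard's identity used earlier in the paper, a linear recurrence that expresses a single gibonacci value at index $5(j+t)+m+s$ in terms of two gibonacci values whose indices differ by $5$. This then fits the pattern $c\,f_{j+1}-d\,f_j$ needed by \eqref{tele_general}.

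Concretely, I would start from Howard's identity
\begin{equation*}
F_{m + 2}G_{k + r} + (-1)^{r - 1}F_{m - r + 2}G_k = F_r G_{k + m + 2},
\end{equation*}
set $r=5$ and then replace $m$ by $m-2$ to obtain
\begin{equation*}
F_m G_{k + 5} + F_{m - 5} G_k = 5\,G_{k + m}.
\end{equation*}
Specialising to $k=5(j+t)+s$ converts this to the pointwise identity
\begin{equation*}
F_m\, G_{5(j + t + 1) + s} + F_{m - 5}\, G_{5(j + t) + s} = 5\,G_{5(j + t) + m + s}.
\end{equation*}

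Next, I would set $f_j := G_{5(j+t)+s}$, $c := F_m$ and $d := -F_{m-5}$ in the general telescoping identity
\begin{equation*}
\sum_{j = 1}^n {d^{n - j} c^{j - 1} \left( {c\,f_{j + 1}  - d\,f_j } \right)}  = c^n f_{n + 1}  - d^n f_1 .
\end{equation*}
With these choices $c f_{j+1}-d f_j = F_m G_{5(j+t+1)+s}+F_{m-5}G_{5(j+t)+s}=5\,G_{5(j+t)+m+s}$, so the left-hand side of the telescoping identity becomes exactly $5\sum_{j=1}^n (-F_{m-5})^{n-j}F_m^{j-1}G_{5(j+t)+m+s}$. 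On the right-hand side, $c^n f_{n+1}=F_m^n G_{5(n+t+1)+s}$ and $d^n f_1=(-F_{m-5})^n G_{5(t+1)+s}=(-1)^n F_{m-5}^n G_{5(t+1)+s}$, yielding the claimed equality.

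The only step requiring any care is bookkeeping on the two index shifts: applying Howard's identity with $r=5$ and then sliding $m\mapsto m-2$ so that the Fibonacci coefficients on the left match $F_m$ and $F_{m-5}$ and the index on the right lands on $k+m$ rather than $k+m+2$; once this is set up, the rest is a mechanical invocation of \eqref{tele_general}, exactly parallel to the proof of Proposition~\ref{prop.lhpm2ek}.
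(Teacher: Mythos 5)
Your proof is correct, but it takes a genuinely different route from the paper. The paper obtains this proposition as a corollary of the already-established $\lambda$-free identity \eqref{eq.r3aho2z}: it substitutes $G_j=\alpha^j$ and $G_j=\beta^j$ in turn (both are legitimate gibonacci sequences since $\alpha^2=\alpha+1$ and $\beta^2=\beta+1$, and for each the product of five consecutive terms collapses to a single power $\alpha^{5(j+t)+m+2}$ or $\beta^{5(j+t)+m+2}$), and then recombines the two resulting scalar identities through the Binet-like formula to recover a general $G$. You instead bypass \eqref{eq.r3aho2z} entirely: you specialize Howard's identity to $r=5$ and shift $m\mapsto m-2$ to get $F_m G_{k+5}+F_{m-5}G_k=5G_{k+m}$ (which is exactly \eqref{eq.fjeqgql} with $r=5$), set $f_j=G_{5(j+t)+s}$, and feed this directly into the telescoping formula \eqref{tele_general} with $c=F_m$, $d=-F_{m-5}$; all the index bookkeeping you carry out checks, and the exponents $(-F_{m-5})^{n-j}F_m^{j-1}$ and the boundary terms come out as stated. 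Your argument is more self-contained and elementary, requiring only the two ingredients already used for Proposition~\ref{prop.lhpm2ek} rather than the full strength of \eqref{eq.r3aho2z}; the paper's substitution trick is shorter on the page and illustrates the useful principle that $\alpha^j$ and $\beta^j$ can be plugged into any gibonacci identity, but it obscures the fact that the result is really just a one-line telescoping consequence of Howard's identity.
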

\begin{proof}
Set $G_j=\alpha^j$ and $G_j=\beta^j$, in turn, in~\eqref{eq.r3aho2z} and combine using the Binet formula.
\end{proof}
We list some examples:
\begin{align}
\sum_{j = 1}^n {3^{n - j} F_{5j + 1} }  &= \frac{1}{5}F_{5n + 5}  - 3^n, \\
\sum_{j = 1}^n {3^{n - j} L_{5j + 1} }  &= \frac{1}{5}L_{5n + 5}  - \frac{{3^n 11}}{5}.
\end{align}
\begin{proposition}
If $m$, $n$, $s$ and $t$ are any integers, then
\begin{equation}
\begin{split}
&\sum_{j = 1}^n {\frac{{( - F_{m + 2} )^{n - j} F_{m - 3}^{j - 1} G_{j + t + m} }}{{G_{j + t - 2} G_{j + t - 1} G_{j + t} G_{j + t + 1} G_{j + t + 2} G_{j + t + 3} }}}\\
&\qquad= \frac{{F_{m - 3}^n }}{{5G_{n + t - 1} G_{n + t} G_{n + t + 1} G_{n + t + 2} G_{n + t + 3} }} - \frac{{( - F_{m + 2} )^n }}{{5G_{t - 1} G_t G_{t + 1} G_{t + 2} G_{t + 3} }};
\end{split}
\end{equation}
with the limiting case
\begin{equation}
\sum_{j = 1}^\infty {\frac{{(- 1)^{j - 1} F_{m - 3}^{j - 1} G_{j + t + m} }}{{F_{m + 2}^jG_{j + t - 2} G_{j + t - 1} G_{j + t} G_{j + t + 1} G_{j + t + 2} G_{j + t + 3} }}} =  \frac1{{5G_{t - 1} G_t G_{t + 1} G_{t + 2} G_{t + 3} }}.
\end{equation}
\end{proposition}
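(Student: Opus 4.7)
The strategy is to turn the two-term relation $5G_{k+m}=F_{m+2}G_{k+3}+F_{m-3}G_{k-2}$, already isolated in the preliminaries, into a telescoping form suitable for the general identity~\eqref{tele_general}. Setting $k=j+t$ and dividing through by the six-term product $G_{j+t-2}G_{j+t-1}G_{j+t}G_{j+t+1}G_{j+t+2}G_{j+t+3}$ yields
\begin{equation*}
\frac{5\,G_{j+t+m}}{G_{j+t-2}G_{j+t-1}G_{j+t}G_{j+t+1}G_{j+t+2}G_{j+t+3}}=F_{m+2}\,f_{j+t}+F_{m-3}\,f_{j+t+1},
\end{equation*}
where I set $f_k=1/(G_{k-2}G_{k-1}G_kG_{k+1}G_{k+2})$. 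The right-hand side is precisely the shape $c\,f_{j+t+1}-d\,f_{j+t}$ with $c=F_{m-3}$ and $d=-F_{m+2}$.

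Applying~\eqref{tele_general} to the sequence $h_j=f_{j+t}$ with these choices of $c$ and $d$, the left-hand sum collapses to $F_{m-3}^n f_{n+t+1}-(-F_{m+2})^n f_{t+1}$. Re-expanding the definition of $f$ and dividing by $5$ produces the finite-sum identity stated in the proposition.

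For the limiting formula, I divide both sides of the finite identity by $-(-F_{m+2})^n$. A short calculation using $(-F_{m+2})^{n-j}/\bigl(-(-F_{m+2})^n\bigr)=(-1)^{j-1}/F_{m+2}^j$ brings the summand on the left into exactly the form appearing in the statement, while the second boundary term becomes $1/\bigl(5G_{t-1}G_tG_{t+1}G_{t+2}G_{t+3}\bigr)$. The remaining boundary term, $-F_{m-3}^n/\bigl((-F_{m+2})^n\cdot 5G_{n+t-1}\cdots G_{n+t+3}\bigr)$, decays to $0$ as $n\to\infty$ because the Binet-type formula gives $G_k\sim c\alpha^k$ and $|F_{m-3}/F_{m+2}|\sim\alpha^{-5}$, so the quantity behaves like $\alpha^{-10n}$. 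The only step requiring a bit of insight is recognizing that dividing by the six-term product converts the two-term relation into the partial-fraction telescoping shape $F_{m+2}f_{j+t}+F_{m-3}f_{j+t+1}$; everything else is mechanical, so I expect no genuine obstacle.
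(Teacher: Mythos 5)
Your derivation of the finite sum is essentially identical to the paper's proof: the paper likewise divides the relation $5G_{j+t+m}=F_{m+2}G_{j+t+3}+F_{m-3}G_{j+t-2}$ by the six-term product, recognizes the result as $c\,f_{j+1}-d\,f_j$ with $c=F_{m-3}$, $d=-F_{m+2}$, and invokes \eqref{tele_general}; so that part is correct and matches.

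One caveat on the limiting case, which the paper merely asserts and you attempt to justify: for a \emph{fixed} integer $m$ the statement ``$|F_{m-3}/F_{m+2}|\sim\alpha^{-5}$'' is not meaningful --- it is an asymptotic in $m$, valid only as $m\to+\infty$. What your argument actually requires is $|F_{m-3}|<\alpha^{5}|F_{m+2}|$, since the boundary term behaves like $\bigl(F_{m-3}/(-F_{m+2}\alpha^{5})\bigr)^{n}$ up to constants. Using $F_{k+5}-\alpha^{5}F_{k}=5\beta^{k}$ one sees this inequality can fail for negative $m$: e.g.\ $m=-4$ gives $|F_{-7}/F_{-2}|=13>\alpha^{5}\approx11.09$, so the series actually diverges there (and $m=-2$ makes $F_{m+2}=0$). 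So the limit holds only under a restriction on $m$ (satisfied in all the paper's examples); your proof of the finite identity is sound, but the unconditional claim of convergence is a genuine gap --- one inherited from the paper itself, which offers no proof of the limit at all.
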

\begin{proof}
Write $j + t$ for $j$ and write~\eqref{main} in the $\lambda$-free form
\begin{equation*}
\begin{split}
&5G_{j + t - 2} G_{j + t - 1} G_{j + t} G_{j + t + 1} G_{j + t + 2} G_{j + t + m} \\
&\qquad= F_{m + 2} G_{j + t - 1} G_{j + t} G_{j + t + 1} G_{j + t + 2} G_{j + t + 3}  + F_{m - 3} G_{j + t - 2} G_{j + t - 1} G_{j + t} G_{j + t + 1} G_{j + t + 2};
\end{split}
\end{equation*}
arrange as
\begin{equation}
\begin{split}
&\frac{{G_{j + t + m} }}{{G_{j + t - 2} G_{j + t - 1} G_{j + t} G_{j + t + 1} G_{j + t + 2} G_{j + t + 3} }}\\
&\qquad= \frac{1}{5}\frac{{F_{m - 3} }}{{G_{j + t - 1} G_{j + t} G_{j + t + 1} G_{j + t + 2} G_{j + t + 3} }} + \frac{1}{5}\frac{{F_{m + 2} }}{{G_{j + t - 2} G_{j + t - 1} G_{j + t} G_{j + t + 1} G_{j + t + 2} }},
\end{split}
\end{equation}
and sum, using~\eqref{tele_general} with $c=F_{m - 3}$ and $d=-F_{m + 2}$.
\end{proof}
We give the following examples.
\begin{align}
\sum_{j = 1}^n {\frac{{3^{j - 1} }}{{F_j F_{j + 2} F_{j + 3} F_{j + 4} F_{j + 5} }}} & =  - \frac{1}{5}\frac{{3^n }}{{F_{n + 1} F_{n + 2} F_{n + 3} F_{n + 4} F_{n + 5} }} + \frac{1}{{150}},\quad [t = 2,m =  - 1],\\
\sum_{j = 1}^n {\frac{{3^{j - 1} }}{{L_j L_{j + 2} L_{j + 3} L_{j + 4} L_{j + 5} }}} & =  - \frac{1}{5}\frac{{3^n }}{{L_{n + 1} L_{n + 2} L_{n + 3} L_{n + 4} L_{n + 5} }} + \frac{1}{4620},\quad [t = 2,m =  - 1],\\
\sum_{j = 1}^n {\frac{{( - 1)^{j - 1} F_{j + 6} }}{{2^{3j} F_j F_{j + 1} F_{j + 2} F_{j + 3} F_{j + 4} F_{j + 5} }}}  &= \frac{1}{5}\frac{{( - 1)^{n + 1} }}{{2^{3n} F_{n + 1} F_{n + 2} F_{n + 3} F_{n + 4} F_{n + 5} }} + \frac{1}{{150}},\quad [t = 2,m = 4],\\
\sum_{j = 1}^n {\frac{{( - 1)^{j - 1} L_{j + 6} }}{{2^{3j} L_j L_{j + 1} L_{j + 2} L_{j + 3} L_{j + 4} L_{j + 5} }}}  &= \frac{1}{5}\frac{{( - 1)^{n + 1} }}{{2^{3n} L_{n + 1} L_{n + 2} L_{n + 3} L_{n + 4} L_{n + 5} }} + \frac{1}{{4620}},\quad [t = 2,m = 4];
\end{align}
with the limiting cases
\begin{align}
\sum_{j = 1}^\infty {\frac{{3^{j - 1} }}{{F_j F_{j + 2} F_{j + 3} F_{j + 4} F_{j + 5} }}} & =\frac{1}{{150}},\\
\sum_{j = 1}^\infty {\frac{{3^{j - 1} }}{{L_j L_{j + 2} L_{j + 3} L_{j + 4} L_{j + 5} }}} & =  \frac{1}{4620},\\
\sum_{j = 1}^\infty {\frac{{( - 1)^{j - 1} F_{j + 6} }}{{2^{3j} F_j F_{j + 1} F_{j + 2} F_{j + 3} F_{j + 4} F_{j + 5} }}}  &=\frac{1}{{150}},\\
\sum_{j = 1}^\infty {\frac{{( - 1)^{j - 1} L_{j + 6} }}{{2^{3j} L_j L_{j + 1} L_{j + 2} L_{j + 3} L_{j + 4} L_{j + 5} }}}  &= \frac{1}{{4620}}.
\end{align}
\section{Concluding comments}
It is apposite to note that the method with which we arrive at~\eqref{eq.uiaqqqw} or~\eqref{eq.r3aho2z} readily extends to any arbitrary sequence $(g_j)_{j\in\mathbb Z}$. Application of~\eqref{tele_general} to
\begin{equation*}
\begin{split}
&g_{j + 1} g_{j + 2}  \cdots g_{j + r - 1} (c\,g_{j + r}  - d\,g_j )\\
&\qquad = c\,g_{j + 1} g_{j + 2}  \cdots g_{j + r}  - d\,g_j g_{j + 1}  \cdots g_{j + r - 1}
\end{split}
\end{equation*}
with $f_j=g_j g_{j + 1}  \cdots g_{j + r - 1}$ gives the summation identity stated below in Proposition~\ref{telescope}.
\begin{proposition}\label{telescope}
If $(g_k)_{k\in\mathbb Z}$ is any sequence, $n$ and $r$ are any integers and $c$ and $d$ are any numbers, then
\begin{equation}\label{eq.m6m26qw}
\begin{split}
&\sum_{j = 1}^n {d\,^{n - j} c\,^{j - 1} g_{j + 1} g_{j + 2}  \cdots g_{j + r - 1} (c\,g_{j + r}  - d\,g_j )}\\ 
&\qquad = c\,^n g_{n + 1} g_{n + 2}  \cdots g_{n + r}  - d\,^n g_1 g_2  \cdots g_r .
\end{split}
\end{equation}
\end{proposition}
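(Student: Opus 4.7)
The plan is to reduce Proposition~\ref{telescope} to the general telescoping identity~\eqref{tele_general} by choosing the auxiliary sequence $f_j$ judiciously. First I would set $f_j = g_j g_{j+1}\cdots g_{j+r-1}$, the sliding product of $r$ consecutive terms of $(g_k)$ starting at index $j$. The shifted product $f_{j+1} = g_{j+1} g_{j+2}\cdots g_{j+r}$ then shares the $(r-1)$ middle factors $g_{j+1}\cdots g_{j+r-1}$ with $f_j$ but loses $g_j$ and gains $g_{j+r}$, which is precisely the structural feature needed to match the summand on the left of~\eqref{eq.m6m26qw}.

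Next I would verify the factorisation
\begin{equation*}
c\,f_{j+1} - d\,f_j \;=\; g_{j+1} g_{j+2}\cdots g_{j+r-1}\,(c\,g_{j+r} - d\,g_j),
\end{equation*}
which is a direct distribution of the common middle product; this is the identity already flagged in the display immediately preceding the proposition. Substituting this factorisation into~\eqref{tele_general} gives
\begin{equation*}
\sum_{j=1}^n d^{\,n-j} c^{\,j-1}\, g_{j+1} g_{j+2}\cdots g_{j+r-1}(c\,g_{j+r} - d\,g_j) \;=\; c^{\,n} f_{n+1} - d^{\,n} f_1,
\end{equation*}
and reading off $f_{n+1} = g_{n+1} g_{n+2}\cdots g_{n+r}$ and $f_1 = g_1 g_2\cdots g_r$ delivers the right-hand side of~\eqref{eq.m6m26qw} verbatim.

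I expect no real obstacle: the argument is essentially a "relabelling" of the one-line telescope~\eqref{tele_general}, with the whole cleverness concentrated in the choice of $f_j$ as a sliding product of length $r$. The only bookkeeping point worth a moment's attention is the degenerate case $r=1$, where the middle product $g_{j+1}\cdots g_{j+r-1}$ must be read as the empty product $1$; under that convention the claim specialises correctly to~\eqref{tele_general} itself with $f_j = g_j$, which reassures one that the indexing in the general statement has been set up consistently.
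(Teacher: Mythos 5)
Your proof is correct and is essentially identical to the paper's own argument: the paper likewise sets $f_j = g_j g_{j+1}\cdots g_{j+r-1}$, factors the summand as $c\,f_{j+1}-d\,f_j$ via the display preceding the proposition, and applies the telescoping identity~\eqref{tele_general}. Your remark on the empty-product convention for $r=1$ is a sensible extra check, but otherwise there is nothing to add.
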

The Howard identity
\begin{equation}\label{eq.fjeqgql}
F_m G_{j + r}  - ( - 1)^r F_{m - r} G_j  = F_r G_{j + m}
\end{equation}
affords an illustration of how~\eqref{eq.m6m26qw} can be put to use. If we consider the sequence $(g_j)$ such that $g_j=G_{j + t}$ and choose $c=F_m$ and $d=(-1)^rF_{m - r}$, then we have the following result.
\begin{proposition}
If $m$, $r$, $n$ and $t$ are any integers such that $r\ge2$, then
\begin{equation}\label{eq.fj6gr0p}
\begin{split}
&F_r \sum_{j = 1}^n {( - 1)^{r(n - j)} F_{m - r}^{n - j} F_m^{j - 1} G_{j + t + 1} G_{j + t + 2}  \cdots G_{j + t + r - 1} G_{j + t + m} } \\
&\qquad = F_m^n G_{n + t + 1} G_{n + t + 2}  \cdots G_{n + t + r}  - ( - 1)^{rn} F_{m - r}^n G_{t + 1} G_{t + 2}  \cdots G_{t + r} .
\end{split}
\end{equation}
\end{proposition}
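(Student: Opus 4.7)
The plan is to apply the general telescoping identity of Proposition~\ref{telescope} to the shifted sequence $g_j = G_{j+t}$, choosing $c$ and $d$ so that the factor $cg_{j+r}-dg_j$ collapses onto a single gibonacci term via the Howard identity~\eqref{eq.fjeqgql}. This is the approach already hinted at in the paragraph preceding the statement; the whole argument is essentially a clean substitution.

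First I would set $g_j = G_{j+t}$, which converts every product $g_{j+1}g_{j+2}\cdots g_{j+r-1}$ appearing in~\eqref{eq.m6m26qw} into $G_{j+t+1}G_{j+t+2}\cdots G_{j+t+r-1}$, and analogously shifts the boundary products $g_1g_2\cdots g_r$ and $g_{n+1}\cdots g_{n+r}$ to $G_{t+1}\cdots G_{t+r}$ and $G_{n+t+1}\cdots G_{n+t+r}$, respectively. Next I would take $c = F_m$ and $d = (-1)^r F_{m-r}$; applying the Howard identity~\eqref{eq.fjeqgql} at the shifted index $j+t$ gives $F_m G_{j+t+r} - (-1)^r F_{m-r} G_{j+t} = F_r G_{j+t+m}$, so that $cg_{j+r} - dg_j = F_r G_{j+t+m}$.

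Plugging these into~\eqref{eq.m6m26qw}, together with the factorisations $d^{n-j}c^{j-1}=(-1)^{r(n-j)}F_{m-r}^{n-j}F_m^{j-1}$, $c^n=F_m^n$, and $d^n=(-1)^{rn}F_{m-r}^n$, and then pulling the constant factor $F_r$ out of the sum on the left, produces exactly~\eqref{eq.fj6gr0p}.

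The only point that requires any care is the sign bookkeeping from the $(-1)^r$ sitting inside $d$: one must keep it absorbed in $d$ (not distribute it to $c$) so that the Howard identity matches on the nose, and one must track $((-1)^r)^{n-j}$ and $((-1)^r)^n$ consistently. The hypothesis $r\ge 2$ is needed only so that the inner product $G_{j+t+1}\cdots G_{j+t+r-1}$ contains at least one factor; beyond this, the proof is a direct substitution with no genuine obstacle.
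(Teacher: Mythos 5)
Your proof is correct and is exactly the argument the paper uses: substitute $g_j=G_{j+t}$, $c=F_m$, $d=(-1)^rF_{m-r}$ into Proposition~\ref{telescope} and collapse $cg_{j+r}-dg_j$ to $F_rG_{j+t+m}$ via the Howard identity~\eqref{eq.fjeqgql}. Nothing to add.
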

The reciprocal version of~\eqref{eq.fj6gr0p}, that is
\begin{equation}
\begin{split}
&F_r \sum_{j = 1}^n {\frac{{( - 1)^{rj} F_m^{n - j} F_{m - r}^{j - 1} G_{j + t + m} }}{{G_{j + t} G_{j + t + 1} G_{j + t + 2}  \cdots G_{j + t + r - 1} G_{j + t + r} }}} = \frac{{( - 1)^{r(n - 1) + 1} F_{m - r}^n }}{{G_{n + t + 1} G_{n + t + 2}  \cdots G_{n + t + r} }} + \frac{{( - 1)^r F_m^n }}{{G_{t + 1} G_{t + 2}  \cdots G_{t + r} }},
\end{split}
\end{equation}
which is valid for all integers $r$, $n$, $t$ and $m$ for which none of the terms in the denominator vanishes, is obtained by arranging~\eqref{eq.fjeqgql} as
\begin{equation*}
( - 1)^{r - 1} \frac{{F_{m - r} }}{{G_{j + r} }} + \frac{{F_m }}{{G_j }} = \frac{{F_r G_{j + m} }}{{G_j G_{j + r} }}
\end{equation*}
and choosing $g_j=G_{j + t}^{- 1}$,, $c=(-1)^{r - 1}F_{m - r}$ and $d=-F_m$ in~\eqref{eq.m6m26qw}.

\hrule



\hrule



\end{document}